\begin{document}

\title{\sc Removing a ray from a noncompact symplectic manifold}
\author{Xiudi Tang}
\date{\today}
\maketitle

\begin{abstract}
  We prove that any noncompact symplectic manifold which admits a properly embedded ray with a wide neighborhood is symplectomorphic to the complement of the ray by constructing an explicit symplectomorphism in the case of the standard Euclidean space.
  We use this excision trick to construct a nowhere vanishing Liouville vector fields on every cotangent bundle.
\end{abstract}

\section{Introduction} \label{sec:intro}

Exotic symplectic structures are known to exist on $\R^{2n}$ by Gromov \cite{MR809718}.
They can not be embedded into $(\R^{2n}, \omega_0)$.
Here $\R^{2n} = \Set{(x_1, y_1, \dotsc, x_n, y_n)}$ is the $2n$-dimensional Euclidean space and $\omega_0 = \der x_1 \wedge \der y_1 + \dotsb + \der x_n \wedge \der y_n$ is the standard symplectic form.
Can non-standard symplectic structures arise from subspaces?
An open subset $A \subset \R^{2n}$ is called \emph{standard} if $(A, \Res{\omega_0}_A)$ is symplectomorphic to $(\R^{2n}, \omega_0)$.
Finite-volume subsets are not standard.
Gromov's nonsqueezing theorem \cite{MR809718} showed that the cylinder $\Set{(x_1, y_1, \dotsc, x_n, y_n) \mmid x_1^2 + y_1^2 < 1}$ is not standard since it contains no ball of radius larger than one. 
Later, such quantitative theory called \emph{symplectic capacities} has been developed vastly, see for instance \cite{MR2369441,MR978597}.
Since capacities are symplectic invariants and $(\R^{2n}, \omega_0)$ has infinite capacity, any subset with a finite capacity will not be standard.
Nevertheless, we want to take a look at the ``large'' subsets.
For any $A \subset \R^{2n}$ diffeomorphic to $\R^{2n}$, if there are symplectic embeddings 
\begin{equation*}
  (\R^{2n}, \omega_0) \hookrightarrow (A, \Res{\omega_0}_A) \hookrightarrow (\R^{2n}, \omega_0),
\end{equation*}
is $A$ neccessarily standard?
McDuff \cite{MR906899} gave some positive examples and a possibly negative one for the above question.
Later McDuff--Traynor \cite{MR1297135} and Traynor \cite{MR1253616} showed that there are infinitely many such $A$ that can be distinguished by the spaces of isotopy classes of symplectic embeddings.
In this paper, we provide another positive example.
Let $L_1$ be the image of $\gamma_0 \colon [0, \infty) \to \R^{2n}$,
\begin{equation*}
  \gamma_0(s) = (0, \dotsc, 0, s, 0),
\end{equation*}
and let $M_1 = \R^{2n} \setminus L_1$.
We use $\mathbb{B}_x^d(r)$ to denote the open ball centered at $x$ in $\R^d$ with radius $r$.
We prove
\begin{theorem} \label{thm:main}
  The subset $M_1$ is standard.
  Moreover, for any $\varepsilon > 0$ there is a symplectomorphism $\varphi \colon (M_1, \Res{\omega_0}_{M_1}) \to (\R^{2n}, \omega_0)$ which is the identity outside of $\mathbb{B}_0^{2n-2}(\varepsilon) \times W(\varepsilon)$, where $W(\varepsilon) = \mathbb{B}_0^2(\varepsilon) \cup \Set{(x, y) \in \R^2 \mmid x > \frac{\sqrt{2}\varepsilon}{2}, x\abs{y} < \frac{\varepsilon^2}{2}}$.
\end{theorem}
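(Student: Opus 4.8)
The plan is to produce the symplectomorphism $\varphi$ directly, arranging that it equals the identity on the complement of $U = \mathbb{B}_0^{2n-2}(\varepsilon) \times W(\varepsilon)$, so that the whole content is a model computation localized near the ray: inside $U$ the map must carry $U \setminus L_1$ bijectively onto $U$, \emph{filling in} the missing ray, while fixing a neighborhood of $\partial U$. I would first linearize the geometry of the horn. On the half-space $\Set{x_n > 0}$ the assignment $(u, v) = (\log x_n,\, x_n y_n)$ is an area-preserving symplectic chart, since $\der u \wedge \der v = \der x_n \wedge \der y_n$; it turns the tongue $\Set{x_n > \tfrac{\sqrt2\varepsilon}{2},\ x_n\abs{y_n} < \tfrac{\varepsilon^2}{2}}$ into the straight half-strip $\Set{u > u_0,\ \abs{v} < \tfrac{\varepsilon^2}{2}}$ and the ray $L_1$ into the central line $\Set{v = 0}$. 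This also explains the threshold $\tfrac{\sqrt2\varepsilon}{2}$: it is exactly the $x_n$-coordinate at which the bounding hyperbola $x_n y_n = \tfrac{\varepsilon^2}{2}$ meets the sphere of radius $\varepsilon$, so that $W(\varepsilon)$ is the round ball with a single straightened tongue attached along the positive $x_n$-axis.

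Next I would treat the two-dimensional heart of the problem ($n = 1$). In the $(u,v)$ chart the slit tongue $\Set{u > u_0,\ 0 < \abs{v} < \varepsilon^2/2}$ is a pair of half-strips joined, across $x_n = \tfrac{\sqrt2\varepsilon}{2}$, through the punctured ball; on the ball I would pass to symplectic polar (action--angle) coordinates $(I,\theta)$ with $I = \tfrac12(x_n^2 + y_n^2)$ and $\theta \in (0, 2\pi)$, in which the slit ball becomes a rectangle and the ray becomes the edge $\Set{\theta = 0}\sim\Set{\theta = 2\pi}$. The removed ray thus appears, in straightened coordinates, as a seam cutting a simply connected region out of an infinite strip; the construction is then an explicit area-preserving map that ``unfolds'' this seam --- identifying the folded strip with the full strip --- chosen to be the identity near the outer boundary $\abs{v} = \varepsilon^2/2$ and near $\partial(\mathbb{B}_0^2(\varepsilon))$, and matched across the overlap $x_n \approx \tfrac{\sqrt2\varepsilon}{2}$. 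The genuine work is to write this unfolding so that, after returning to the Cartesian $(x_n, y_n)$, it extends smoothly across the filled-in ray and in particular across the tip $0 \in L_1$, where the polar coordinates degenerate.

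For $n \ge 2$ I would reinstate the transverse ball. Since $L_1$ sits at transverse radius $r = \sqrt{x_1^2 + \cdots + y_{n-1}^2} = 0$, a naive product of the two-dimensional map with the identity is impossible --- at $r > 0$ nothing is removed, whereas at $r = 0$ the whole slit must be filled. I would therefore let the unfolding act with a strength governed by a cutoff $\beta(r)$ that recovers the full two-dimensional slit-filling as $r \to 0$ and is the identity for $r \ge \varepsilon$, carrying out the interpolation by a rotation in the normal directions to the ray so that the family of slice maps closes up smoothly at $r = 0$. Throughout, I would keep every intermediate map a symplectomorphism by building it from the straightening charts (where the form is constant) together with shear and rotation maps that manifestly preserve $\omega_0 = \der u \wedge \der v + \sum_{i<n}\der x_i\wedge\der y_i$, so that $\varphi^*\omega_0 = \omega_0$ is automatic and only smoothness and bijectivity remain.

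The step I expect to be the main obstacle is precisely the smoothness and global bijectivity at the ray itself: verifying that the unfolded map descends to a genuine diffeomorphism onto all of $\R^{2n}$ --- surjecting onto the filled-in ray with no folding or collision --- and that it is $C^\infty$ across $L_1$ and at the tip $0$, where the action--angle and $(\log x_n, x_n y_n)$ charts both break down. Controlling this requires choosing the unfolding and the cutoff $\beta$ to equal the identity to infinite order near $\partial U$ and to have all derivatives under control as $r \to 0$ and as $x_n \to \infty$ along the pinching horn; the symplectic condition, by contrast, is built into the construction and should need only routine verification.
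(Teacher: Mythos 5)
Your preliminary reductions are sound: the localization to $U$, the area-preserving chart $(u,v) = (\log x_n,\, x_n y_n)$ straightening the tongue (your computation of the threshold $\frac{\sqrt{2}\varepsilon}{2}$ is correct), and the two-dimensional unfolding in action--angle coordinates are all consistent with what a correct proof must contain. The genuine gap is in the step you dismiss as routine. For $n \geq 2$ you propose to interpolate, via a cutoff $\beta(r)$ in the transverse radius, between the two-dimensional slit-filling map at $r = 0$ and the identity at $r = \varepsilon$, and you assert that the symplectic condition ``is built into the construction and should need only routine verification.'' It is not, and in fact it is the entire content of the theorem. If the slice maps vary with $z' = (z_1, \dotsc, z_{n-1})$, then $\varphi^*\omega_0$ acquires cross terms pairing the transverse differentials against the slice differentials; writing $A = \partial \varphi_n/\partial z'$ and $B = \partial \varphi_n / \partial z_n$, these terms vanish only if $A^T J B = 0$, and since $B$ is invertible this forces $A = 0$ --- so \emph{any} $z'$-dependence of the slice maps must be compensated by motion in the transverse directions, and the compensation is not free: the transverse rotation angles and the slice deformation must together satisfy an exactness condition. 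Concretely (this is what the paper's \cref{lem:equivalence} encodes), a fiber-preserving map over a base map $(c', c_n) \mapsto (c', g(c', c_n))$ of the invariants $c_i = \abs{z_i}^2/2$, $c_n = \mu(z_n)$ is symplectic precisely when the angles and the flow time are $\frac{\partial h}{\partial c_1} t_n, \dotsc, \frac{\partial h}{\partial c_n} t_n$ for the \emph{single} function $h$ given by the last component of the inverse base map, with $t_n$ the time coordinate of the $X_\mu$-flow. So the cutoff cannot be prescribed freely in $r$: it must live on the base and be propagated to the total space through derivatives of one generating function. Your plan contains no mechanism producing this coupling, and there is no routine fix.

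Relatedly, your requirement that the construction have ``all derivatives under control as $r \to 0$'' cannot be met by any correct construction. A homeomorphism $W \setminus L \to W$ must push every sequence approaching the slit out of every compact subset of $W$ (otherwise continuity of its inverse fails), so the slice maps at small $r > 0$, which are honest diffeomorphisms, can only limit onto the slit-filler if the transverse part of the map twists unboundedly near the removed ray; in the paper this wildness is carried by the base diffeomorphisms (e.g.\ $\lim_{c_n \to 0^+} g_3(0, \dotsc, 0, c_n) = -\infty$ in \cref{ass:g3}), while symplecticity is guaranteed once and for all by \cref{lem:equivalence} rather than checked by hand. The paper's route --- realizing $M_1$, an intermediate region $M_2$, and $\R^{2n}$ as noncompact semitoric systems $(\mu_0, \dotsc, \mu_0, \mu)$, with $\mu_1, \mu_2$ built so that $X_\mu$ is nonvanishing and \emph{complete} (completeness, which your plan never invokes, is what yields bijectivity onto the filled-in target), and then lifting base diffeomorphisms $G_1, G_3$ --- is precisely the organizing principle your interpolation would have to rediscover; identifying it, not the smoothness bookkeeping at the tip, is where the work lies.
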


Let $(M, \omega)$ be a noncompact symplectic manifold.
For a proper embedding $\gamma$ of the \emph{ray} $[0, \infty)$ into $M$, we define the \emph{$\gamma$-width} of $(M, \omega)$ as
\begin{equation*}
  \sup \Set{\pi\varepsilon^2 > 0 \mmid \exists\, j \text{ such that } \Pa{\mathbb{B}_0^{2n-2}(\varepsilon) \times \R^2, \omega_0} \xhookrightarrow{j} (M, \omega), j \circ \gamma_0 = \gamma}.
\end{equation*}
If $(M, \omega)$ has positive $\gamma$-width we call it \emph{$\gamma$-wide}.
By \cref{prop:embedding-R2}, it is equivalent to the existence of an embedding $\Pa{\mathbb{B}_0^{2n-2}(\varepsilon) \times W(\varepsilon), \omega_0} \xhookrightarrow{j} (M, \omega), j \circ \gamma_0 = \gamma$ for some $\varepsilon > 0$.
We have a corollary of \cref{thm:main} which partially answers \cite[Question~11.2]{2018arXiv181111109B}.

\begin{corollary} \label{cor:noncompact-removing-a-ray}
  Let $(M, \omega)$ be a noncompact symplectic manifold which is $\gamma$-wide for a proper embedding $\gamma$ of $[0, \infty)$. 
  Then $(M, \omega)$ is symplectomorphic to $(M \setminus \image \gamma, \Res{\omega}_{M \setminus \image \gamma})$.
\end{corollary}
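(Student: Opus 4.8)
The plan is to localize the symplectomorphism furnished by \cref{thm:main} near the ray and then extend it by the identity over the rest of $M$.

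First, I would use the $\gamma$-wide hypothesis, together with the equivalence recorded after the definition of $\gamma$-width (which rests on \cref{prop:embedding-R2}), to obtain for some $\varepsilon > 0$ a symplectic embedding
\[
  j \colon \Pa{\mathbb{B}_0^{2n-2}(\varepsilon) \times W(\varepsilon), \omega_0} \hookrightarrow (M, \omega), \qquad j \circ \gamma_0 = \gamma .
\]
Set $U = \mathbb{B}_0^{2n-2}(\varepsilon) \times W(\varepsilon)$. The compatibility $j \circ \gamma_0 = \gamma$ forces $\image \gamma = j(L_1)$, and the shape of $W(\varepsilon)$ is tailored so that it contains the whole half-line $\Set{(s,0) \mmid s \ge 0}$: for $s < \varepsilon$ the point $(s,0)$ lies in $\mathbb{B}_0^2(\varepsilon)$, while for $s > \frac{\sqrt{2}\varepsilon}{2}$ it lies in the second piece of $W(\varepsilon)$. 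Hence $L_1 \subset U$, and therefore the image of the entire properly embedded ray satisfies $\image \gamma \subset j(U)$.

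Next I would transport $\varphi$ from \cref{thm:main}. Since $\varphi$ is the identity outside $U$ and $L_1 \subset U$, it fixes $\R^{2n} \setminus U$ and hence restricts to a symplectomorphism $\varphi \colon \Pa{U \setminus L_1, \omega_0} \to \Pa{U, \omega_0}$ (surjectivity onto $U$ follows because $\varphi$ is a bijection of $\R^{2n} \setminus L_1$ onto $\R^{2n}$ that fixes the complement of $U$). I would then define $\Phi \colon M \setminus \image \gamma \to M$ by
\[
  \Phi(p) =
  \begin{cases}
    j\big(\varphi(j^{-1}(p))\big), & p \in j(U) \setminus \image \gamma, \\
    p, & p \in M \setminus j(U),
  \end{cases}
\]
which is well defined since $\image \gamma \subset j(U)$ shows the two regions cover $M \setminus \image \gamma$ and are disjoint. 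On the first region $\Phi = j \circ \varphi \circ j^{-1}$ is a symplectomorphism onto $j(U)$, and on the second it is the identity onto $M \setminus j(U)$, so $\Phi$ is a bijection of $M \setminus \image \gamma$ onto $M$ satisfying $\Phi^* \omega = \omega$ branchwise (using $\varphi^* \omega_0 = \omega_0$ and that $j$ is symplectic).

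The step I expect to be the main obstacle is verifying that $\Phi$ is smooth across the interface $j(\partial U)$; this is exactly where the ``identity outside $U$'' clause of \cref{thm:main} is needed. Concretely, one must check that the explicit $\varphi$ agrees with the identity to all orders along $\partial U$ --- ideally that it is the identity on a full neighborhood of $\R^{2n} \setminus U$ --- so that $j \circ \varphi \circ j^{-1}$ glues smoothly to the identity near $j(\partial U)$. Granting this, $\Phi$ is a global symplectomorphism $\Pa{M \setminus \image \gamma, \Res{\omega}_{M \setminus \image \gamma}} \to (M, \omega)$, and the corollary follows. Everything apart from this boundary-gluing check is formal.
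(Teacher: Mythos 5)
Your overall strategy --- transporting the symplectomorphism of \cref{thm:main} through the embedding $j$ supplied by $\gamma$-wideness and \cref{prop:embedding-R2}, then extending by the identity --- is exactly the deduction the paper intends (the paper states the corollary without writing this out), and your preliminary steps are correct: $L_1 \subset U$, $\image\gamma = j(L_1)$, and $\varphi$ restricts to a symplectomorphism $U \setminus L_1 \to U$. Moreover the half of the obstacle you flag is fixable just as you suspect: apply \cref{thm:main} with some $\varepsilon' < \varepsilon$; a short computation gives $\overline{\mathbb{B}_0^{2n-2}(\varepsilon') \times W(\varepsilon')} \subset \mathbb{B}_0^{2n-2}(\varepsilon) \times W(\varepsilon) = U$, so $\varphi$ is the identity on an open neighborhood of $\R^{2n} \setminus U$.

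However, there is a genuine gap, and it is not located where you put it. Since $j$ is defined only on the open set $U$ and need not extend to $\overline{U}$, the interface of your gluing is not ``$j(\partial U)$'' but $\partial\bigl(j(U)\bigr) \subset M$, and these can be very different sets. Continuity (let alone smoothness) of $\Phi$ at a point $p_0 \in \partial\bigl(j(U)\bigr)$ requires that $\Phi$ equal the identity on a whole $M$-neighborhood of $p_0$, which needs the following: writing $K \subset U$ for the support of $\varphi$ together with $L_1$, the set $j(K)$ must be closed in $M$, i.e.\ $j$ must be proper on $K$. This is a real condition, because $K$ is necessarily unbounded (no symplectomorphism $M_1 \to \R^{2n}$ can have bounded support: if $\varphi = \identity$ outside a ball $B$, then $\varphi$ would map the open set $B \setminus L_1$ onto $B \cup L_1$, which is not open), and the hypothesis that $j \circ \gamma_0 = \gamma$ is proper only controls $j$ along the ray itself. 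A symplectic embedding of $U$ can perfectly well send a sequence $x_k \in K$ with $\abs{x_k} \to \infty$ to a sequence converging to some $p_0 \in M \setminus j(U)$ (``finger''-type embeddings whose tail wraps back and accumulates elsewhere in $M$); at such $p_0$ your $\Phi$ is discontinuous, no matter how flat $\varphi - \identity$ is along $\partial U$. So the missing step is to show that the witnessing embedding can be chosen, or corrected, to be proper on a closed wide sub-neighborhood such as $\overline{\mathbb{B}_0^{2n-2}(\varepsilon') \times W(\varepsilon')}$; granted that, $j(K)$ is closed in $M$, your two defining regions can be replaced by the open sets $j(U) \setminus \image\gamma$ and $M \setminus j(K)$, which overlap where both maps are the identity, and the gluing is complete. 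Note that in the paper's own application (\cref{prop:cotangent-bundle}) the embeddings are explicit fiberwise ones for which this properness is evident, so the issue is invisible there, but for the corollary in the stated generality it must be addressed.
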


Suppose we have an object on a noncompact symplectic manifold $(M, \omega)$ with only a few ``bad'' points on a properly embedded ray $\gamma$, and $(M, \omega)$ is $\gamma$-wide.
We can swipe these ``bad'' points off $M$ using the symplectomorphism constructed in \cref{cor:noncompact-removing-a-ray} to get another object which is ``regular''.
For instance, let $X$ be a Liouville vector field on an exact symplectic manifold $(M, \omega)$ and suppose we can connect all zeroes of $X$ through a properly embedded ray $\gamma$ such that $(M, \omega)$ is $\gamma$-wide.
Then $(M, \omega)$ admits a nonvanishing Liouville vector field, or equivalently a nonvanishing primitive of $\omega$, see \cite[Question~11.1]{2018arXiv181111109B}.
By removing multiple rays, we show that it is the case when $M$ is a cotangent bundle.

\begin{corollary} \label{cor:nonvanishing-Liouville-cotangent-bundle}
  The cotangent bundle of a smooth manifold admits a nonvanishing Liouville vector field.
\end{corollary}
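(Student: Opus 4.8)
The plan is to first replace the canonical Liouville vector field on $T^*N$, whose zero set is the entire zero section, by a Liouville vector field with a discrete zero set, and then to excise those zeros by removing rays. Fix a Riemannian metric on $N$ and let $k \colon T^*N \to \R$ be the associated fiberwise-quadratic kinetic energy, and let $f \colon N \to \R$ be a proper Morse function bounded below; such an $f$ exists on any smooth manifold and has a closed discrete critical set. Let $\lambda$ be the tautological $1$-form, so that $\omega = \der\lambda$, and set
\[ \lambda' = \lambda + \der\Pa{k + f \circ \pi}, \]
where $\pi \colon T^*N \to N$ is the projection. Since we only added an exact form, $\der\lambda' = \omega$, so $\lambda'$ is again a primitive of $\omega$. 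A local computation in canonical coordinates $(q, p)$ shows that the $\der p_i$ components of $\lambda'$ vanish only where $p = 0$, and that there $\lambda'$ restricts to $\der f$; hence $\lambda'$ vanishes exactly on the critical set of $f$, a closed discrete subset $Z \subset T^*N$ contained in the zero section, finite whenever $N$ is compact. The dual Liouville vector field $X'$, defined by $\iota_{X'}\omega = \lambda'$, then vanishes exactly on $Z$.

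Next I would arrange the points of $Z$ on rays. Within each connected component of $N$, order the points of $Z$ and join them successively by an embedded arc, escaping to infinity along a cotangent fiber; since $T^*N$ has dimension at least $2$ and the critical points exit every compact set, this produces a locally finite family $\{\gamma_i\}$ of disjoint properly embedded rays whose union contains $Z$. The essential point is that $(T^*N, \omega)$ is $\gamma_i$-wide for each $i$: because each ray can be routed to coincide, outside a compact set, with a straight half-line inside a cotangent fiber $T_q^*N \cong \R^n$, and the fibers are noncompact, there should be room to embed $\Pa{\mathbb{B}_0^{2n-2}(\varepsilon) \times \R^2, \omega_0}$ along $\gamma_i$ carrying $\gamma_0$ to $\gamma_i$, or equivalently, by \cref{prop:embedding-R2}, to embed $\mathbb{B}_0^{2n-2}(\varepsilon) \times W(\varepsilon)$. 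Verifying this positive $\gamma_i$-width, namely fitting the horn-shaped region $W(\varepsilon)$ into a neighborhood of the escaping ray, is the step I expect to be the main obstacle.

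Finally, apply \cref{cor:noncompact-removing-a-ray}. By the ``moreover'' clause of \cref{thm:main}, the excising symplectomorphism may be taken to be the identity outside an arbitrarily small neighborhood of the ray, so the independent excisions along the disjoint neighborhoods of the $\gamma_i$ assemble, using local finiteness, into a single symplectomorphism $\varphi \colon \Pa{T^*N \setminus \bigcup_i \image\gamma_i, \omega} \to (T^*N, \omega)$. On its domain the restriction of $\lambda'$ is a nowhere-vanishing primitive of $\omega$, since every zero of $\lambda'$ lay on one of the removed rays. Therefore $\Pa{\varphi^{-1}}^* \lambda'$ is a nowhere-vanishing primitive of $\omega$ on all of $T^*N$, and the vector field $X$ determined by $\iota_X \omega = \Pa{\varphi^{-1}}^* \lambda'$ is the desired nonvanishing Liouville vector field.
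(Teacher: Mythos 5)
Your perturbed primitive is correct, and it is a genuine variant of the paper's: the $\der p_j$-components of $\lambda' = \lambda + \der\Pa{k + f \circ \pi}$ are $\sum_i g^{ij} p_i$, which vanish only on the zero section, where $\lambda'$ restricts to $\der f$; so the zero set of $\lambda'$ is exactly the critical set of the proper Morse function $f$, a closed discrete subset of the zero section. The paper instead splits into cases: for $N$ noncompact (or compact with $\chi(N) = 0$) it cites prior work, and for $N$ compact it uses $\beta = \varepsilon \der\Pa{\iota_X\theta} - \theta$ with $X$ a vector field having finitely many zeros, so that the zero set of $\beta$ is $\operatorname{Fix}(X)$, a finite set. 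Your construction treats all cases uniformly, and your final step (assembling the excisions by local finiteness and pulling $\lambda'$ back through the inverse symplectomorphism) is sound.

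The gap is exactly where you flag it, and it is a real one because of how you route the rays. By stringing all points of $Z$ in a component onto a single ray --- possibly infinitely many of them when $N$ is noncompact --- you are left having to verify wideness for a ray with no standard structure: a neighborhood theorem for a properly embedded isotropic curve gives a symplectically standard tubular neighborhood, but its transverse width may shrink to zero at infinity, whereas $\gamma$-wideness demands a \emph{uniform} radius $\varepsilon$ in the $2n-2$ ball directions along the entire ray (only the two-dimensional factor $W(\varepsilon)$ is allowed to pinch). That uniformity is precisely what the hypothesis of \cref{cor:noncompact-removing-a-ray} exists to guarantee, and nothing in your construction supplies it; you would moreover need the wideness neighborhoods of distinct rays to be disjoint. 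The repair is to abandon connecting the zeros: excision is done ray by ray anyway, so use one ray per zero $c \in Z$, namely a linear ray $\gamma_c$ inside the fiber $T_c^*N$, together with disjoint coordinate charts $U_c \ni c$ (possible since $Z$ is closed and discrete). Canonical coordinates identify $\Pa{\pi^{-1}(U_c), \omega_{\mathrm{can}}}$ with an open set $\mathbb{B}^n \times \R^n$ in $(\R^{2n}, \omega_0)$, and the linear symplectomorphism sending $(x_n, y_n)$ to $(q_n, p_n) = (-y_n, x_n)$ and fixing the other coordinates carries $\mathbb{B}_0^{2n-2}(\varepsilon) \times W(\varepsilon)$ into $\pi^{-1}(U_c)$ with $\gamma_0$ going to $\gamma_c$; here one uses that every $(x, y) \in W(\varepsilon)$ has $\abs{y} < \varepsilon$, so the image stays over the chart. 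Wideness is then immediate, the excisions have disjoint supports by construction, and the rest of your argument goes through; this is precisely the paper's route.
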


\emph{Acknowledgments:} The author would like to thank Alan Weinstein who brought this problem up to the author in July 2017,  Dusa McDuff, and Laura Starkston for their interest.
The author thanks \'Alvaro Pelayo, Reyer Sjamaar, Alan Weinstein, and Xiangdong Yang for helpful discussions and comments.
The author thanks Sean Curry for bringing out \cref{prop:embedding-R2} in their discussions.
The majority of the work has been done at Cornell University.

\section{Equivalence via integrable systems} \label{sec:equivalence}

The proof of \cref{thm:main} relies on the equivalence of integrable systems.
A basic fact in symplectic geometry is that diffeomorphic manifolds have symplectomorphic cotangent bundles.
Note that the fibers of a cotangent bundle are Lagrangian.
In many cases, a diffeomorphism between the bases of two singular Lagrangian fibrations preserving certain structures can be uniquely lifted to a fiberwise symplectomorphism, up to the Hamiltonian flow of momentum maps (the fiberwise translation).
The principle, reducing symplectomorphisms between $2n$-dimensional manifolds to diffeomorphisms between $n$-dimensional spaces, has been used in \cite{MR984900,MR596430,MR3371718,MR2670163,2018arXiv180300998P,MR2534101,MR2024634,MR2018823}.

In \cref{ssec:integrable-systems}, we construct integrable systems, one on either $M_1$ or $\R^{2n}$, and two on a new region $M_2 \subset \R^{2n}$.
The integrable systems are \emph{semitoric}, in the sense that the first $(n-1)$ components of the momentum maps generate $\mathbb{T}^{n-1}$-Hamiltonian actions.
However, they differ from those in \cite{MR2534101,MR2784664} as the fibers are noncompact.
We prove in \cref{ssec:equivalence} an equivalence lemma for those noncompact semitoric systems which as far as we know is not in the literature.
The completeness of the Hamiltonian vector fields is crucial.
In \cref{ssec:construct-symplecto} we use the equivalence lemma to relate two systems on $M_2$ to that on $M_1$ and that on $\R^{2n}$, respectively.
In this way we show that $M_2$ is symplectomorphic to $M_1$ and $\R^{2n}$. 
As a result, $M_1$ is standard.

\subsection{Four integrable systems on three manifolds} \label{ssec:integrable-systems}

In this subsection, we establish two lemmas to prove \cref{thm:main}.
By \cref{lem:first-symplecto} we show that $M_1$ is symplectomorphic to $M_2$ (defined later) and by \cref{lem:second-symplecto} $M_2$ is symplectomorphic to $\R^{2n}$.
The supports of the symplectomorphisms are also taken care of so that $M_1$ is symplectomorphic to $\R^{2n}$ with the complement of the prescibed neighborhood of $L_0$ fixed.

Consider the momentum map of the harmonic oscillator
\begin{align*}
  \mu_0 \colon \R^2 &\to \R, \\
  (x, y) &\mapsto \frac{x^2 + y^2}{2}.
\end{align*}
See \cref{fig:mu0}.

\inputfigure{mu0}{mu0}{Illustration of $\mu_0$ by its level sets.}

First, let $\mu_1 \colon \R^2 \to \R$ be a function satisfying \cref{ass:mu-1} below.
Recall that for any smooth function $f \colon M \to \R$ on a symplectic manifold $(M, \omega)$, its \emph{Hamiltonian vector field} is defined as $X_f = -\omega^{-1} (\der f) \in \mathfrak{X}(M)$.
Let $L = [0, \infty) \times \Set{0}$.
Write $\mathbb{B}^d = \mathbb{B}_0^d(1)$ and $W = W(1)$.

\begin{assumption} \label{ass:mu-1}
\ 
\begin{enumerate} [label={(\arabic*)}]
  \item The continuous function $\mu_1$ is smooth on $\R^2 \setminus L$;
  \item the range of $\mu_1$ is $[0, \infty)$;
  \item the level set $\mu_1^{-1}(0) = L$, and $\mu_1^{-1}([0, 2)) \subset W$;
  \item the Hamitonian vector field $X_{\mu_1}$ is nonvanishing and complete on $\R^2 \setminus L$.
\end{enumerate}
\end{assumption}

Let
\begin{align*}
  F_1 = (\mu_0, \dotsc, \mu_0, \mu_1) \colon \R^{2n} &\to \R^n,
\end{align*}
and let
\begin{align*}
  B_1 &= [0, \infty)^n \setminus \Set{0}^n, \\
  B_2 &= [0, \infty)^n \setminus \Set{0}^{n-1} \times [0, 1].
\end{align*}
Let
\begin{equation*}
  L_2 = \Set{(0, \dotsc, 0, x_n, y_n) \in \R^{2n} \mmid \mu_1(x_n, y_n) \leq 1} \subset \mathbb{B}^{2n-2} \times W
\end{equation*}
and let $M_2 = \R^{2n} \setminus L_2$.
Then $M_1 = F_1^{-1}(B_1)$ and $M_2 = F_1^{-1}(B_2)$.
Moreover, $(M_1, \Res{\omega_0}_{M_1}, \Res{F_1}_{M_1})$ is an intergrable system over $B_1$ and $(M_2, \Res{\omega_0}_{M_2}, \Res{F_1}_{M_2})$ is an intergrable system over $B_2$ (except where $F_1$ is not smooth).
Observe that $B_1$ and $B_2$ are diffeomorphic.
We prove the following lemma in \cref{ssec:construct-symplecto}.

\begin{lemma} \label{lem:first-symplecto}
  Two systems $(M_1, \Res{\omega_0}_{M_1}, \Res{F_1}_{M_1})$ and $(M_2, \Res{\omega_0}_{M_2}, \Res{F_1}_{M_2})$ are fiberwise symplectomorphic.
  In particular, $(M_1, \Res{\omega_0}_{M_1})$ and $(M_2, \Res{\omega_0}_{M_2})$ are symplectomorphic with the complement of $\mathbb{B}^{2n-2} \times W$ fixed.
\end{lemma}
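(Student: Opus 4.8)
The plan is to apply the equivalence lemma for noncompact semitoric systems from \cref{ssec:equivalence}, after making the relevant symmetry structure on both sides explicit and producing an honest base diffeomorphism between $B_1$ and $B_2$. On each of $M_1$ and $M_2$ the first $n-1$ components of $F_1$ generate the standard $\mathbb{T}^{n-1}$-action by rotations in the first $n-1$ coordinate planes, collapsing one circle factor along each toric boundary face, while $\mu_1$ generates a complete nonvanishing flow on $\R^2 \setminus L$ whose orbits are the connected level sets $\mu_1^{-1}(c_n)$ for $c_n > 0$. Together these give a fiberwise Hamiltonian $\mathbb{T}^{n-1} \times \R$-action with momentum map $F_1$, and it is precisely the completeness hypothesis in \cref{ass:mu-1} that makes the $\R$-orbits, hence the noncompact fibers, uniform; this is the input the equivalence lemma needs and the reason the noncompact direction can be reparametrized at will.

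First I would construct a diffeomorphism $g \colon B_1 \to B_2$ fixing the periodic coordinates and reparametrizing only the last one. Writing a base point as $(c', c_n)$ with $c' = (c_1, \dotsc, c_{n-1})$, I would set $g(c', c_n) = (c', h(c', c_n))$, where $h$ is a smooth family of increasing reparametrizations of the half-line chosen so that along the edge $c' = 0$ it carries $(0, \infty)$ diffeomorphically onto $(1, \infty)$ — thereby opening the slit that distinguishes $B_2$ from $B_1$ — while $h(c', c_n) = c_n$ whenever $c_n \geq 2$ or $\abs{c'}$ exceeds a small threshold $\delta$. Keeping $c'$ fixed is what makes $g$ respect the integral affine structure, since the period lattice of the $\mathbb{T}^{n-1}$-action is constant in the $c'$-directions, whereas the noncompact $c_n$-direction carries no period constraint; one also checks directly that $g$ preserves the corner stratification $\Set{c_i = 0}$.

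Applying the equivalence lemma to $g$ would then produce a fiberwise symplectomorphism $\Phi \colon (M_1, \Res{\omega_0}_{M_1}) \to (M_2, \Res{\omega_0}_{M_2})$ lifting $g$, unique up to the fiberwise $\mathbb{T}^{n-1} \times \R$-translation, which I would normalize to be trivial where $g$ is the identity. For the support claim, $g$ differs from the identity only on $R = \Set{(c', c_n) \mmid \abs{c'} < \delta, c_n < 2}$; taking $\delta$ small enough that $\abs{c'} < \delta$ forces $\sum_{i < n} (x_i^2 + y_i^2) < 1$ and using $\mu_1^{-1}([0, 2)) \subset W$ from \cref{ass:mu-1}, we get $F_1^{-1}(R) \subset \mathbb{B}^{2n-2} \times W$, so $\Phi$ is the identity off $\mathbb{B}^{2n-2} \times W$. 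The main obstacle is verifying that $g$ meets the hypotheses of the equivalence lemma along every singular stratum: not only at the toric corners where the $\mathbb{T}^{n-1}$-circles collapse, but above all near the removed locus, where the family $h$ must steepen into a boundary layer as $c' \to 0$ in order to be a genuine diffeomorphism of manifolds-with-corners, and along the fibers over $c_n = 0$ with $c' \neq 0$ where $\mu_1$ itself fails to be smooth. Confirming that the lift extends smoothly and symplectically across these degenerate fibers, with completeness of $X_{\mu_1}$ guaranteeing that the reparametrized noncompact direction integrates to a globally defined map, is where the real work lies; the support bookkeeping above is then routine.
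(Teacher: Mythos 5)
Your overall strategy --- lift a base diffeomorphism $B_1 \to B_2$ of the prescribed triangular form through the equivalence lemma --- is the same as the paper's, but there is a genuine gap at exactly the point you defer as ``where the real work lies.'' You apply \cref{lem:equivalence} to the \emph{full} systems over $B_1$ and $B_2$. That lemma is only available under \cref{ass:noncompact-semitoric-integrable-system}, which requires $\mu$ to be smooth with $X_\mu$ nonvanishing and complete on $\mu^{-1}(I)$ for an interval $I$ containing the last coordinate of every base point. Since $B_1$ contains the stratum $\Set{c_n = 0, \ce > 0}$, you would need $0 \in I$; but $\mu_1$ is not smooth on $\mu_1^{-1}(0) = L$ (\cref{ass:mu-1} only gives smoothness on $\R^2 \setminus L$), so the hypotheses fail and the lemma produces nothing over those fibers. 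Worse, your support region is the cylinder $\Set{\abs{c'} < \delta,\ c_n < 2}$, whose closure meets that singular stratum: your $h(c', \cdot)$ must steepen toward the slit as $c' \to 0$, so the lift is forced to be nontrivial on fibers arbitrarily close to the non-smooth ones. There the lift's formula involves the flow time $t_n(z_n)$, which blows up as $z_n$ approaches $L$, so extending the lift even continuously across the fibers $T_{c'} \times L$ is not ``routine bookkeeping''; it is an obstruction your construction cannot avoid.

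The missing idea is the shape of the support. The paper takes $U_1 = \Set{(c_1, \dotsc, c_n) \in B_1 \mmid c_n < 2,\ \ce < 1,\ \ce < c_n}$: the extra wedge condition $\ce < c_n$ guarantees that the closure of $U_1$ \emph{in $B_1$} is disjoint from $\Set{c_n = 0}$ entirely, because the only possible contact point would be the origin, which has been removed from $B_1$. Consequently the base diffeomorphism $G_1$ is the identity on a full $B_1$-neighborhood of the singular stratum, and one never needs the lemma there: the paper applies \cref{lem:equivalence} only over the open set $V_1 = \Set{\ce < 2c_n} \supset \overline{U_1}$, on which $F_1$ is genuinely smooth and $X_{\mu_1}$ is nonvanishing and complete (take $I = (0, \infty)$), obtaining $\varphi_{\frac12} \colon F_1^{-1}(V_1) \to F_1^{-1}(G_1(V_1))$; since $G_1 = \identity$ on $V_1 \setminus U_1$, the lemma's last clause makes $\varphi_{\frac12}$ the identity on $F_1^{-1}(V_1 \setminus U_1)$, so it glues with the identity to give $\varphi_1 \colon M_1 \to M_2$. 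To repair your proof you should replace your cylindrical support by such a wedge (your $g$ must equal the identity wherever $c_n \lesssim \ce$, not merely wherever $\abs{c'} \geq \delta$), and then restrict--extend as above rather than invoking the lemma globally.
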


Second, let $\mu_2 \colon \R^2 \to \R$ be a smooth function satisfying \cref{ass:mu-2}.

\begin{assumption} \label{ass:mu-2}
\ 
\begin{enumerate} [label={(\arabic*)}]
  \item On $\mu_1^{-1}([1, \infty))$, $\mu_2 = \mu_1 - 1$;
  \item the range of $\mu_2$ on $\mu_1^{-1}([0, 1))$ is $(-\infty, 0)$;
  \item the Hamitonian vector field $X_{\mu_2}$ is nonvanishing and complete.
\end{enumerate}
\end{assumption}

Let
\begin{align*}
  F_2 = (\mu_0, \dotsc, \mu_0, \mu_2) \colon \R^{2n} &\to \R^n.
\end{align*}
and let
\begin{align*}
  B_3 &= [0, \infty)^{n-1} \times \R \setminus \Set{0}^{n-1} \times (-\infty, 0], \\
  B_4 &= [0, \infty)^{n-1} \times \R.
\end{align*}
Then $M_2 = F_2^{-1}(B_3)$ and $F_2$ is onto $B_4$.
Moreover, $(M_2, \Res{\omega_0}_{M_2}, \Res{F_2}_{M_1})$ is an intergrable system over $B_3$ and $(\R^{2n}, \omega_0, F_2)$ is an intergrable system over $B_4$.
Observe that $B_3$ and $B_4$ are diffeomorphic.
We prove the following lemma in \cref{ssec:construct-symplecto}.

\begin{lemma} \label{lem:second-symplecto}
  Two systems $(M_2, \Res{\omega_0}_{M_2}, \Res{F_2}_{M_2})$ and $(\R^{2n}, \omega_0, F_2)$ are fiberwise symplectomorphic.
  In particular, $M_2 \subset \R^{2n}$ is standard via a symplectomorphism fixing the complement of $\mathbb{B}^{2n-2} \times W$.
\end{lemma}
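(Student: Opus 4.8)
The plan is to deduce \cref{lem:second-symplecto} from the equivalence lemma for noncompact semitoric systems established in \cref{ssec:equivalence}: it suffices to produce a diffeomorphism $g \colon B_3 \to B_4$ of the two bases that preserves the stratification by the vanishing loci of the $\mu_0$ components and the integral affine structure carried by the first $n-1$ (torus) components of $F_2$, and then to lift it to a fiberwise symplectomorphism $\Phi \colon M_2 \to \R^{2n}$ with $F_2 \circ \Phi = g \circ F_2$. The completeness of the relevant Hamiltonian vector fields, which the equivalence lemma needs in order to trivialize the noncompact fibers globally, is supplied by the torus action in the first $n-1$ slots and by \cref{ass:mu-2}(3) in the last slot.

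The organizing observation is that $B_4 \setminus B_3 = \Set{0}^{n-1} \times (-\infty, 0]$ is a ray lying in the deepest stratum of $B_4$, the locus where all $n-1$ components $\mu_0$ vanish, and that $L_2 = F_2^{-1}(B_4 \setminus B_3)$ is precisely the union of the fibers over this ray. Over this stratum every fiber is a single orbit of $X_{\mu_2}$, which by \cref{ass:mu-2}(3) is nonvanishing and complete; the completeness supplies a global time coordinate along the fiber and is exactly what makes this noncompact direction freely reparametrizable, in contrast with the torus directions whose periods are rigid. Thus the base-level task is a ``remove a ray from the corner'' problem on the $n$-dimensional base, free of any symplectic constraint.

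Concretely I would take $g$ to fix the coordinates $b = (b_1, \dotsc, b_{n-1})$ and to act on the last coordinate by $c \mapsto h(b_1, \dotsc, b_{n-1}, c)$, where on the deepest stratum $h(0, \dotsc, 0, \cdot)$ is an orientation-preserving diffeomorphism of $(0, \infty)$ onto $\R$ that is the identity for $c \geq 1$ and stretches $(0, 1)$ onto $(-\infty, 1)$, thereby filling in the removed ray. For $b \neq 0$ the full line is available, so $h(b, \cdot)$ is a diffeomorphism of $\R$ that is the identity for $c \geq 1$; as $b \to 0$ it must converge to $h(0, \dotsc, 0, \cdot)$ on $c > 0$ while escaping to $-\infty$ on $c \leq 0$ so as to open the hole. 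Because fixing the $b_i$ automatically preserves both the stratification and the torus periods, and because the last direction is unconstrained, such a $g$ meets the hypotheses of the equivalence lemma. Using \cref{ass:mu-1}(3) together with \cref{ass:mu-2}(1)--(2) to locate where $\mu_2 < 1$ and the $\mu_0$ are small, I can moreover arrange $h$ to equal the identity except over a region of $B_4$ whose full $F_2$-preimage is contained in $\mathbb{B}^{2n-2} \times W$, so that the lift $\Phi$, normalized by the fiberwise flow to be the identity wherever $g$ is, restricts to the identity on the complement of $\mathbb{B}^{2n-2} \times W$. This exhibits $M_2$ as standard with the stated support.

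The main obstacle is the explicit construction of $g$ near the deepest stratum: one must reparametrize the last coordinate so as to open $(0, \infty)$ up to $\R$ on the locus $b = 0$ while interpolating smoothly to the identity across every intermediate stratum, where only a proper subset of the $\mu_0$ vanish and where $h(b, c)$ for fixed $c \leq 0$ must run off to $-\infty$ as $b \to 0$, all the while keeping the support confined as above. Preservation of the affine structure in the torus directions is automatic, but verifying that this $h$ is smooth up to and across the fixed-point locus of the torus action, and that the induced reparametrization of the $X_{\mu_2}$-orbits lifts to a smooth symplectomorphism there rather than merely on the open dense stratum, is the delicate point that the equivalence lemma of \cref{ssec:equivalence} is designed to absorb.
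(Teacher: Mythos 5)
Your proposal is correct and follows essentially the same route as the paper: its proof applies \cref{lem:equivalence} to the two systems together with a smooth section $s_{n,2} \colon \R \to \R^2$ of $\mu_2$ and a triangular base diffeomorphism $G_3 \colon B_3 \to B_4$ (fixing the first $n-1$ coordinates, carrying $(0,\infty)$ onto $\R$ over the deepest stratum, and equal to the identity outside $U_2$), so that the lifted fiberwise symplectomorphism is the identity outside $F_2^{-1}(U_2) \subset \mathbb{B}^{2n-2} \times W$. The explicit function $g_3$ realizing your $h$ --- what you call the main obstacle --- is constructed in \cref{ssec:diffeomorphism} from the closed-form example $g_4$ by a patching and smoothing argument, exactly as your outline requires.
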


\subsection{Equivalence lemma} \label{ssec:equivalence}

We want to consider \emph{noncompact semitoric integrable systems} satisfying \cref{ass:noncompact-semitoric-integrable-system}.

\begin{assumption} \label{ass:noncompact-semitoric-integrable-system}
The triple $(M, \omega, F)$ is constructed as follows:
\begin{enumerate} [label={(\arabic*)}]
  \item let $\mu \colon \R^2 \to \R$ be a continuous map and $I \subset \mu(\R^2)$ is an interval such that on $\mu^{-1}(I)$, $\mu$ is smooth and $X_\mu$ is nonvanishing and complete;
  \item let $\tilde{F} = (\mu_0, \dotsc, \mu_0, \mu) \colon \R^{2n} \to \R^n$;
  \item let $B$ be a subset of $\tilde{F}(\R^{2n-2} \times \mu^{-1}(I)) = [0, \infty)^{n-1} \times I$, called the \emph{base}, such that $M = \tilde{F}^{-1}(B) \subset \R^{2n}$ is open;
  \item then let $\omega = \Res{\omega_0}_M$ and $F = \Res{\tilde{F}}_M$.
\end{enumerate}
\end{assumption}

Note that the map $\Res{\mu}_{\mu^{-1}(I)}$ admits smooth sections, for instance, the ones given by the flow of $\nabla \mu / \abs{\nabla \mu}^2$, where $\nabla \mu$ is the gradient of $\mu$ with respect to the Euclidean metric.
We will establish the following equivalence lemma.

\begin{lemma} \label{lem:equivalence}
  Let $(M, \omega, F)$ and $(M', \omega', F')$ be two noncompact semitoric integrable systems satisfying \cref{ass:noncompact-semitoric-integrable-system} with the same $\mu$ and $I$.
  Let $B = F(M)$ and $B' = F'(M')$ be their bases.
  Let $s_n \colon I \to \R^2$ be a smooth section of $\Res{\mu}_{\mu^{-1}(I)}$.
  Let $s \colon B \to M$ be $s(c_1, \dotsc, c_{n-1}, c_n) = (c_1, \dotsc, c_{n-1}, s_n(c_n))$.
  If $G \colon B \to B'$ is a diffeomorphism such that $G(c_1, \dotsc, c_{n-1}, c_n) = (c_1, \dotsc, c_{n-1}, g(c_1, \dotsc, c_n))$ for some $g \colon B \to \R$, then there is a unique symplectomorphism $\varphi \colon M \to M'$ preserving $s$ and lifting $G$, namely $\varphi \circ s = s$ and $F' \circ \varphi = G \circ F$.
  Moreover, if $G$ is the identity in a subset $C \subset B$ without isolated points, then $\varphi$ is the identity in $F^{-1}(C)$.
\end{lemma}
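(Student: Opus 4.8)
The plan is to build action–angle-type coordinates out of the commuting flows of the components of $F$, to reduce the construction of $\varphi$ to an explicit fibrewise reparametrization of the angle variables, and to show that the symplectic condition is then forced by a single cancellation coming from the symmetry of the Hessian of $g$. Throughout, completeness of the Hamiltonian vector fields is exactly what lets me flow for all times and thereby coordinatize each fibre; this is the crucial ingredient the introduction flags.

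First I would set up the flows. Write $F = (F_1, \dotsc, F_n)$, where $F_i = \mu_0$ on the $i$-th factor for $i < n$ and $F_n = \mu$. The fields $X_{F_1}, \dotsc, X_{F_n}$ Poisson-commute (they act on independent factors) and are complete: the first $n-1$ are the $2\pi$-periodic rotations of the harmonic oscillator, and the last is $X_\mu$, complete by \cref{ass:noncompact-semitoric-integrable-system}. Their joint flow thus defines an action $\Phi$ of $\mathbb{T}^{n-1} \times \R$ on $M$, transitive on each fibre of $F$. Since $s$ is a section of $F$, every $p \in M$ is $p = \Phi_t(s(c))$ with $c = F(p)$, and on the open locus $B^\circ$ where all $c_i > 0$ for $i<n$ this is unique modulo the period lattice, giving coordinates $(c,t) \in B^\circ \times (\mathbb{T}^{n-1}\times\R)$. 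A short computation, using $\omega_0 = \der\mu_0\wedge\der\theta$ on each oscillator factor and $\omega_0 = \der\mu\wedge\der t_n$ along the $X_\mu$-flow, yields $\omega = \sum_{i=1}^n \der c_i \wedge \der t_i$. The same formulas give coordinates $(c',t')$ on $M'$ with $\omega' = \sum \der c'_i \wedge \der t'_i$, since $M'$ is built from the same $\mu$ and $I$.

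Next I would define $\varphi$. Normalizing by $\varphi\circ s = s'$ makes the angle coordinates vanish on the section, so the only freedom is a $c$-dependent linear reparametrization $\tau$ of the times; imposing $F'\circ\varphi = G\circ F$ (that is, $\tilde c = G(c)$) together with $\varphi^*\omega' = \omega$ forces $\partial_t\tau = (DG)^{-T}$, i.e.
\begin{equation*}
  \tau_i = t_i - \frac{\partial_i g}{\partial_n g}\,t_n \quad (i<n), \qquad \tau_n = \frac{t_n}{\partial_n g},
\end{equation*}
with $\tilde c = G(c)$ and $\partial_n g = \det DG \neq 0$ because $G$ is a diffeomorphism. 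The decisive verification is $\varphi^*\omega' = \omega$: expanding $\varphi^*\omega' = \sum_i \der G_i\wedge\der\tau_i$, the terms with no factor of $t_n$ assemble precisely to $\sum_i \der c_i\wedge\der t_i$, while the remaining $t_n$-linear terms collapse to $-\frac{t_n}{\partial_n g}\sum_{i,m}\partial_i\partial_m g\,\der c_i\wedge\der c_m$, which vanishes because $\partial_i\partial_m g$ is symmetric and $\der c_i\wedge\der c_m$ is antisymmetric. This cancellation is the heart of the lemma, and it is the step I expect to be the main obstacle to get cleanly, together with the smooth extension discussed below.

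Finally I would handle smoothness, uniqueness, and the last clause. The formula for $\tau$ is singular where some $c_i = 0$, but there $\varphi$ acts on the $i$-th oscillator factor as rotation by the smooth angle $-(\partial_i g/\partial_n g)\,t_n$ (smooth since $\partial_n g\neq 0$), which extends smoothly across the oscillator's fixed point; running the analogous construction for $G^{-1}$ produces the inverse, so $\varphi$ is a symplectomorphism of $M$ onto $M'$ lifting $G$ and preserving $s$. For uniqueness, if $\varphi,\varphi'$ both lift $G$ and preserve the section, then $\psi = \varphi'^{-1}\circ\varphi$ is a symplectomorphism over $\mathrm{id}_B$ with $\psi\circ s = s$; since $\psi$ preserves each $F_i$ it commutes with every $X_{F_i}$, hence with the transitive joint flow, so fixing the section forces $\psi = \mathrm{id}$. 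For the last assertion, $G = \mathrm{id}$ on $C$ means $g(c) = c_n$ there, and I would deduce $\nabla(g - c_n) = 0$ on $C$ from this (immediate when $C$ is open, and in general extracted from difference quotients of $g - c_n$ along $C$, where the absence of isolated points is used); then $\partial_i g = 0$ for $i<n$ and $\partial_n g = 1$ on $C$, so both corrections above vanish and the explicit formula shows $\varphi = \mathrm{id}$ on $F^{-1}(C)$.
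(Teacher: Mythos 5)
Your construction of $\varphi$ and your uniqueness argument are correct and are essentially the paper's own proof. The lift you write down --- rotate the $i$-th oscillator factor by the angle $-(\partial_i g/\partial_n g)\,t_n$ and send the last factor to $\Phi_\mu^{t_n/\partial_n g}\bigl(s_n(g(c))\bigr)$ --- is exactly the paper's explicit formula, there written via the partial derivatives of the $n$-th component $h$ of $G^{-1}$; the paper likewise gets uniqueness from equivariance under the Hamiltonian flows. Where the paper settles symplecticity ``by direct calculations or noting that it preserves a Lagrangian section,'' you carry out the direct calculation, and it is right: the $\der c\wedge\der t$ terms force $\partial_t\tau=(DG)^{-T}$, and the $t_n$-linear remainder cancels by symmetry of the Hessian of $g$. (One shared caveat: both you, in asserting the joint flow is transitive on fibers, and the paper, in asserting $(t,c_n)\mapsto\Phi_\mu^t(s_n(c_n))$ is a diffeomorphism onto $\mu^{-1}(I)$, implicitly assume the level sets of $\mu$ in $\mu^{-1}(I)$ are connected; this holds for the $\mu_1,\mu_2$ the lemma is applied to.)

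The genuine gap is in the final clause. From $g(c)=c_n$ on a set $C$ without isolated points you cannot deduce $\nabla(g-c_n)=0$ on $C$: difference quotients along $C$ control the derivative of $g-c_n$ only in directions in which $C$ accumulates at the given point, not the full gradient. Concretely, take $n=2$, $\mu=\mu_2$, $I=\R$, $B=B'=[0,\infty)\times\R$, and $g(c_1,c_2)=\lambda c_1+(1-\lambda)c_2$ with small $\lambda\neq 0$. Then $G$ is a diffeomorphism of $B$ equal to the identity exactly on the diagonal ray $C=\{c_1=c_2\geq 0\}$, which has no isolated points, yet $\partial_1 g=\lambda\neq 0$ on $C$, so the (unique) lift rotates $z_1$ by the angle $-\frac{\lambda}{1-\lambda}\,t_n$ over points of $C$ and is not the identity on $F^{-1}(C)$. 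This shows your step is not merely unproven but unprovable: the clause needs $C$ to be contained in the closure of its interior, not merely to lack isolated points. The paper's proof in effect uses that stronger property: $G=\identity$ on the open set $C^\circ$ forces $\partial_i g=\delta_{in}$ there, hence $\varphi=\identity$ on $F^{-1}(C^\circ)$, and then $\varphi=\identity$ on $F^{-1}(C)$ by continuity --- an argument valid precisely when $C\subset\overline{C^\circ}$, as is the case for the sets $V_1\setminus U_1$ and $B_3\setminus U_2$ to which the lemma is actually applied. You should replace the difference-quotient step by this interior-plus-continuity argument, noting the hypothesis it really requires.
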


\begin{proof}
  For $x = (z_1, \dotsc, z_n) \in M$, let $b = (c_1, \dotsc, c_n) = F(x) \in B$.
  Here we use the identification $z_i = x_i + y_i \sqrt{-1}$ for $i = 1, \dotsc, n$.
  Let $\Phi_{\mu}^t \colon \mu^{-1}(I) \to \mu^{-1}(I)$, $t \in \R$ be the flow of $X_\mu$.
  Since $X_\mu$ is nonvanishing and complete, 
  \begin{align*}
    \R \times I &\to \mu^{-1}(I), \\
    (t, c_n) &\mapsto \Phi_{\mu}^t(s_n(c_n))
  \end{align*}
  is a diffeomorphism.
  Let $t_n \colon \mu^{-1}(I) \to \R$ be the smooth submersion defined by $z_n = \Phi_{\mu}^{t_n(z_n)}(s_n(c_n))$ for any $z_n \in \mu^{-1}(I)$.
  Let $h \colon B' \to \R$ be the $n$-th component of $G^{-1}$.
  Define
  \begin{align*}
    \varphi \colon M &\to M', \\
    \Pa{z_1, \dotsc, z_{n-1}, z_n} &\mapsto \Pa{e^{\frac{\partial h}{\partial c_1} t_n \sqrt{-1}} z_1, \dotsc, e^{\frac{\partial h}{\partial c_{n-1}} t_n \sqrt{-1}} z_{n-1}, \Phi_{\mu}^{\frac{\partial h}{\partial c_n} t_n} s_n(g(b))}.
  \end{align*}
  Here, $t_n$ is short for $t_n(z_n)$ and $\frac{\partial h}{\partial c_i}$ for $i = 1, \dotsc, n$ is short for
  \begin{equation*}
    \frac{\partial h}{\partial c_i}(G(b)) = \begin{cases}
      -\frac{\partial g/\partial c_i(b)}{\partial g/\partial c_n(b)}, & i = 1, \dotsc, n-1; \\
      \frac{1}{\partial g/\partial c_n(b)}, & i = n.
    \end{cases}
  \end{equation*}
  Then $\varphi$ lifts $G$.
  It is bijective since, by the nonvanishing and completeness of $X_\mu$ on $M$ and $M'$, it is bijective $F^{-1}(b) \to (F')^{-1}(G(b))$ for each $b \in B$.
  It is smooth by the explicit formula. 
  We have $\varphi^* \omega' = \omega$ by direct calculations or noting that it preserves a Lagrangian section $s$ and is equivariant under Hamiltonian flows, see \cite[Lemma~2.3]{2018arXiv180300998P}.
  The uniqueness is also by the equivariance under Hamiltonian flows.

  Moreover, if $\Res{G}_C = \identity_C$, by the definition of $\varphi$ in $F^{-1}(C^\circ)$, it is the identity there, so $\varphi$ is the identity in $F^{-1}(C)$ by continuity.
\end{proof}

\subsection{Constructing symplectomorphisms} \label{ssec:construct-symplecto}

Let $\ce = c_1 + \dotsb + c_{n-1}$. 
We prove, in \cref{ssec:diffeomorphism}, that for the open neighborhood $U_1 = \Set{(c_1, \dotsc, c_n) \in B_1 \mmid c_n < 2, \ce < 1, \ce < c_n}$ of $\Set{0} \times (0, 1]$ in $B_1$, there is a smooth function $g_1 \colon B_1 \to \R$, which satisfies the following assumption.
\begin{assumption} \label{ass:g1}
\ 
\begin{enumerate} [label={(\arabic*)}]
  \item $g_1(c_1, \dotsc, c_n) = c_n$ in $B_1 \setminus U_1$;
  \item $\frac{\partial g_1}{\partial c_n}(c_1, \dotsc, c_n) > 0$;
  \item $\displaystyle\lim_{c_n \to 0^+} g_1(0, \dotsc, 0, c_n) = 1$, and $g_1(c_1, \dotsc, c_{n-1}, 0) = 0$ for $\ce > 0$;
  \item $\displaystyle\lim_{c_n \to \infty} g_1(c_1, \dotsc, c_n) = \infty$.
\end{enumerate}
\end{assumption}

Then $G_1 \colon B_1 \to B_2$ defined as $G_1(c_1, \dotsc, c_{n-1}, c_n) = (c_1, \dotsc, c_{n-1}, g_1(c_1, \dotsc, c_n))$ is a diffeomorphism.

\begin{proof}[Proof of \cref{lem:first-symplecto}]
  Let $V_1 = \Set{(c_1, \dotsc, c_n) \in B_1 \mmid \ce < 2c_n} \supset \overline{U_1}$.
  The closure is taken in $B_1$.
  Note that $F_1$ is smooth on $F_1^{-1}(V_1)$.
  By applying \cref{lem:equivalence} to 
  \begin{equation*}
    \Pa{F_1^{-1}(V_1), \Res{\omega_0}_{F_1^{-1}(V_1)}, \Res{F_1}_{F_1^{-1}(V_1)}}~\text{and}~\Pa{F_1^{-1}(G_1(V_1)), \Res{\omega_0}_{F_1^{-1}(G_1(V_1))}, \Res{F_1}_{F_1^{-1}(G_1(V_1))}}
  \end{equation*}
  (subsets of $M_1$ and $M_2$), a smooth section $s_{n, 1} \colon (0, \infty) \to \R^2$ of $\Res{\mu_1}_{\R^2 \setminus L}$, and the diffeomorphism $\Res{G_1}_{V_1} \colon V_1 \to G_1(V_1)$, we obtain a fiberwise symplectomorphism $\varphi_{\frac12} \colon F_1^{-1}(V_1) \to F_1^{-1}(G_1(V_1))$.
  Since $G_1 = \identity$ on $B_1 \setminus U_1$, $\varphi_{\frac12}$ is the identity in $F_1^{-1}(V_1 \setminus U_1)$.
  So, we can extend $\varphi_{\frac12}$ by the identity to a fiberwise symplectomorphism $\varphi_1 \colon M_1 \to M_2$, which coincides with the identity outside of $F_1^{-1}(U_1) \subset \mathbb{B}^{2n-2} \times W$.
\end{proof}

For an illustration of $\varphi_1$ and $G_1$, see \cref{fig:varphi1}.

\inputfigure{varphi1}{varphi1}{The fiber preserving symplectomorphism $\varphi_1 \colon M_1 \to M_2$ supported in $\overline{\mathbb{B}^{2n-2} \times W}$ lifts the diffeomorphism $G_1 \colon B_1 \to B_2$ supported in $\overline{U_1}$.
The closed set $C_1$ is used to construct $G_1$, see \cref{ssec:diffeomorphism}.}

\bigskip

Let $U_2 = \Set{(c_1, \dotsc, c_n) \in B_3 \mmid \ce < 1, c_n < 1}$.
Then $U_2$ is an open neighborhood of $\Set{0} \times (-\infty, 0]$ in $B_3$.
We prove, in \cref{ssec:diffeomorphism}, that there is a smooth function $g_3 \colon B_3 \to \R$ satisfying the following assumption.

\begin{assumption} \label{ass:g3}
\ 
\begin{enumerate} [label={(\arabic*)}]
  \item $g_3(c_1, \dotsc, c_n) = c_n$ in $B_3 \setminus U_2$;
  \item $\frac{\partial g_3}{\partial c_n}(c_1, \dotsc, c_n) > 0$;
  \item $\displaystyle\lim_{c_n \to 0^+} g_3(0, \dotsc, 0, c_n) = \lim_{c_n \to -\infty} g_3(c_1, \dotsc, c_n) = -\infty$ if $\ce > 0$;
  \item $\displaystyle\lim_{c_n \to \infty} g_3(c_1, \dotsc, c_n) = \infty$.
\end{enumerate}
\end{assumption}

Then $G_3 \colon B_3 \to B_4$ defined as $G_3(c_1, \dotsc, c_{n-1}, c_n) = (c_1, \dotsc, c_{n-1}, g_3(c_1, \dotsc, c_n))$ is a diffeomorphism.

\begin{proof}[Proof of \cref{lem:second-symplecto}]
  By applying \cref{lem:equivalence} to 
  \begin{equation*}
    \Pa{M_2, \Res{\omega_0}_{M_2}, \Res{F_2}_{M_1}}~\text{and}~\Pa{\R^{2n}, \omega_0, F_2},
  \end{equation*}
  a smooth section $s_{n, 2} \colon \R \to \R^2$ of $\mu_2$ and the diffeomorphism $G_3 \colon B_3 \to B_4$, we obtain a fiberwise symplectomorphism $\varphi_3 \colon M_2 \to \R^{2n}$ which coinsides with the identity outside of $F_2^{-1}(U_2) \subset \mathbb{B}^{2n-2} \times W$.
\end{proof}

For an illustration of $\varphi_3$ and $G_3$, see \cref{fig:varphi3}.

\inputfigure{varphi3}{varphi3}{The fiber preserving symplectomorphism $\varphi_3 \colon M_2 \to \R^{2n}$ supported in $\overline{\mathbb{B}^{2n-2} \times W}$ lifts the diffeomorphism $G_3 \colon B_3 \to B_4$ supported in $\overline{U_2}$.
The closed set $C_2$ is used to construct $G_1$, see \cref{ssec:diffeomorphism}.}

\section{Explicit constructions} \label{sec:explicit}

Throughout \cref{sec:equivalence} we utilize some momentum maps and diffeomorphisms satisfying certain assumptions.
To ensure those to exist, we give explicit constructions in this section.

\subsection{Momentum maps} \label{ssec:momentum-map}

In order to find $\mu_1$ satisfying \cref{ass:mu-1}, we ``blow up'' $\R^2$ along $L = [0, \infty) \times \Set{0}$ via the ``normalized double-argument map''
\begin{align*}
  \psi \colon \mathbb{H}^2 = \R \times [0, \infty) &\to \R^2, \\
  (\xi, \eta) &\mapsto \Pa{\frac{\xi^2-\eta^2}{\sqrt{2(\xi^2+\eta^2)}}, \frac{2\xi\eta}{\sqrt{2(\xi^2+\eta^2)}}}.
\end{align*}
Then $\psi$ restricts to a symplectomorphism $\R \times (0, \infty) \to \R^2 \setminus L$ with respect to the standard symplectic form and folds $\Set{0} \times \R$ along the origin onto $L$, 
In particular, the Hamiltonian dynamics on $\R \times (0, \infty)$ and $\R^2 \setminus L$ are equivalent.

Recall that $W = \mathbb{B}^2 \cup \Set{(x, y) \in \R^2 \mmid x > \sqrt{\frac12}, x\abs{y} < \frac12}$ is an open neighborhood of $L$ in $\R^2$, so $\psi^{-1}(W)$ is an open neighborhood of $\Set{0} \times \R$ in $\mathbb{H}^2$.
Let $u \colon \R \to (0, \infty)$ be a smooth function satisfying
\begin{align*}
  \begin{cases}
    u(\xi) \leq \sqrt{2 - \xi^2}, &\abs{\xi} \leq \sqrt{1 + \frac{\sqrt{2}}{2}}, \\
    u(\xi) = \frac{1}{\abs{\xi}}, &\abs{\xi} \geq \sqrt{1 + \frac{\sqrt{2}}{2}}.
  \end{cases}
\end{align*}
Then the graph of $u$ lies inside of $\psi^{-1}(W)$.
Let
\begin{align*}
  \wt{\mu}_1 \colon \mathbb{H}^2 &\to \R; \\
  (\xi, \eta) &\mapsto \frac{2\eta}{u(\xi)}.
\end{align*}
Then the range of $\wt{\mu}_1$ is $[0, \infty)$ and its level set over $0$ is $\Set{0} \times \R$.
Note that the Hamiltonian vector field $X_{\tilde{\mu}_1}$ is nonvanishing and its $\partial/\partial \xi$-component is $-\frac{\partial\tilde{\mu}_1}{\partial \eta} = 2\abs{\xi}$ for $\abs{\xi}$ large, so $X_{\tilde{\mu}_1}$ is complete.
Let $\mu_1 \colon \R^2 \to \R$ be uniquely defined by $\mu_1 \circ \psi = \wt{\mu}_1$, then $X_{\mu_1}$ is complete on $\R^2 \setminus L$, too.
This shows that $\mu_1$ satisfies \cref{ass:mu-1}, see \cref{fig:mu1}.

\inputfigure{mu1}{mu1}{Illustration of $\mu_1$ by its level sets.}

\bigskip

Let 
\begin{align*}
  W_1 &= \Set{(x, y) \in \R^2 \mmid x^2 + y^2 < 1\text{ when } x \leq 0\text{ or }\abs{y}\Pa{\Pa{x+1}^2 + y^2} < 2\text{ when } x \geq 0}, \\
  W_2 &= \Set{(x, y) \in \R^2 \mmid x^2 + y^2 < \frac14\text{ when } x \leq 0\text{ or }\abs{y}\Pa{\Pa{x+\frac12}^2 + y^2} < \frac14\text{ when } x \geq 0},
\end{align*}
so that $\overline{W_2} \subset W_1 \subset \overline{W_1} \subset W$, $\mu_1(\partial W_2) = (0, 1]$, and $\mu_1(\R^2 \setminus W_1) = (0, \infty)$.

We define $\mu_{\frac43} \colon \overline{W_2} \to \R$ by the following steps.
For any $(x, y) \in \overline{W_2}$, let $a = x + \sqrt{\frac14 - y^2}$.
Then denote by $(x_a, y_a)$ the unique intersection of $x_a + \sqrt{\frac14 - y_a^2} = a$ and $\abs{y_a}\Pa{\Pa{x_a+\frac12}^2 + y_a^2} = \frac14$.
The value of $\mu_{\frac43}(x, y)$ is defined as $\mu_1(x_a, y_a)$.

Let $\mu_{\frac53} \colon \R^2 \to \R$ be a smooth function without critical points which coincides with $\mu_1$ in $\R^2 \setminus W_1$ and with $\mu_{\frac43}$ in $\overline{W_2}$.
Note that the range of $\mu_{\frac53}$ is $(0, \infty)$, and the Hamiltonian vector field $X_{\mu_{\frac53}}$ is complete since it coincides with $X_{\mu_1}$ outside of $\overline{W_1}$ on which $\mu_{\frac53}$ is proper.
Compose with $\mu_{\frac53}$ a diffeomorphism $(0, \infty) \to \R$ which is the minus-one map in $[1, \infty)$ we obtain $\mu_2 \colon \R^2 \to \R$ satisfying \cref{ass:mu-2}, see \cref{fig:mu2}.

\inputfigure{mu2}{mu2}{Illustration of $\mu_2$ by its level sets.
The open sets $W_1$ and $W_2$ are used to construct $\mu_2$ from $\mu_1$; in the complement of $\overline{W_2}$, $\mu_2 = \mu_1 - 1$.}

The following proposition indicates that we will not lose generality by replacing $W(\varepsilon)$ by $\R^2$ in the definition of $\gamma$-width in \cref{sec:intro}.

\begin{proposition} \label{prop:embedding-R2}
  There is a symplectic embedding $(\R^2, \omega_0) \to (W, \omega_0)$ preserving $L_0$.
\end{proposition}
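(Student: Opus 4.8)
The plan is to exploit that in dimension two a symplectomorphism is nothing but an area- and orientation-preserving diffeomorphism, so the only a priori obstruction is total area; since $W$ has infinite area there is room to fit all of $(\R^2, \omega_0)$. The geometry of $W$ becomes transparent after the area-preserving change of coordinates
\[
  \Phi(x, y) = (\ln x, xy), \qquad \der(\ln x) \wedge \der(xy) = \der x \wedge \der y,
\]
a symplectomorphism of the right half-plane $\Set{x > 0}$ onto $\R^2$. Under $\Phi$ the thin part $\Set{(x, y) \mmid x > \sqrt{1/2},\, x\abs{y} < 1/2}$ of $W$ becomes the half-strip $H = (-\frac{1}{2}\ln 2, \infty) \times (-\frac{1}{2}, \frac{1}{2})$, and $L_0$ becomes its central axis $\Set{p = 0}$. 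So, symplectically, $W$ is a round disk with a half-infinite strip of width $1$ attached as a horn, with $L_0$ running straight down the horn. A basic observation guides everything: since the horn pinches transversally like $1/x$, any area-preserving map sending $L_0$ into $W$ must pinch transversally and hence, by area preservation, reparametrize $L_0$ with unboundedly growing speed. In particular no such map can fix $L_0$ \emph{pointwise}, so ``preserving $L_0$'' must mean $\Psi(L_0) = L_0$ as sets; and $\Phi^{-1}$ (which carries $\Set{p = 0}$ onto $\Set{y = 0}$ by $x = e^{q}$ while compressing the transverse direction by $e^{-q} = 1/x$) is exactly the right local model along $L_0$.

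First I would reduce the proposition to producing an area-preserving diffeomorphism $\Psi \colon \R^2 \to W$ with $\Psi(L_0) = L_0$; restricting then yields the embedding (in fact a symplectomorphism onto $W$). Because $W$ is diffeomorphic to $\R^2$, has a single end, and has infinite area, the volume-form version of Moser's theorem for open manifolds (Greene--Shiohama) already supplies \emph{some} area-preserving diffeomorphism $\R^2 \to W$. To make it respect $L_0$, I would first write down an explicit diffeomorphism $f \colon \R^2 \to W$ that carries $L_0$ onto $L_0$ and is already area-preserving near $L_0$: the identity on the near-origin segment inside the disk, and modeled on $\Phi^{-1}$ far out along the horn, interpolating in between. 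Then I would run Greene--Shiohama relative to a neighborhood of $L_0$, where the two area forms already agree, to correct $f$ into an area-preserving map without moving $L_0$ or its chosen neighborhood. Transporting the construction through $\Phi$ keeps everything explicit in the horn.

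The hard part will be the ray condition, not the area bookkeeping. The tension is precisely between sending $L_0$ into the pinching horn and keeping the map a single smooth injective diffeomorphism over all of the noncompact plane: pointwise fixing is outright impossible by the pinch-versus-area computation above, so the construction must absorb an unbounded reparametrization along $L_0$ and realize the transverse compression globally while splicing to the identity near the origin. Area itself is never an obstruction, since both $(\R^2, \omega_0)$ and $W$ have one end and infinite volume, so the matching-ends hypothesis of Greene--Shiohama holds; the genuine work is the global smoothness and injectivity of the squeeze together with the relative (rel a neighborhood of $L_0$) Moser--Greene--Shiohama correction. If one prefers an entirely hands-on route, the same squeeze can be built by folding $\R^2$ into the half-strip $H$---exhausting $\R^2$ by disks and using that a disk of arbitrary area symplectically embeds into a fixed-width strip---and then transporting by $\Phi^{-1}$ into the horn of $W$.
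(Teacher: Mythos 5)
Your proposal is organized around the ``basic observation'' that no area-preserving embedding of $(\R^2, \omega_0)$ into $W$ can fix $L_0$ pointwise, and everything downstream --- reinterpreting ``preserving $L_0$'' as merely setwise, the unbounded reparametrization, the relative Greene--Shiohama correction --- is built to accommodate that claim. The claim is false. The heuristic behind it conflates transverse pinching with longitudinal stretching: area preservation does not force the image of a short vertical segment at $(s,0)$ to stay within the vertical width $\sim 1/s$ of the horn near its image point; the segment may shear and run along the horn. Concretely, if $\varphi$ fixes $L_0$ pointwise then $D\varphi(s,0)e_1 = e_1$, and $\det D\varphi = 1$ forces only $D\varphi(s,0)e_2 = a(s)e_1 + e_2$, an arbitrary shear with unit vertical component --- perfectly compatible with mapping a \emph{pinching} neighborhood of $L_0$ into $W$. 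The paper's proof produces exactly such a map, and in fact one that is the identity on a whole neighborhood of $L_0$: the level sets $\mu_1^{-1}(c)$, $0 < c < 2$, of the momentum map of \cref{ass:mu-1} pinch along the horn and satisfy $\mu_1^{-1}([0,2)) \subset W$; the flow-box coordinates $(t,c) \mapsto \Phi_{\mu_1}^t(s_n(c))$ give a symplectomorphism $\R \times (0,\infty) \to \R^2 \setminus L_0$; composing with the area-preserving squeeze $(t,c) \mapsto (t/h'(c), h(c))$, where $h \colon (0,\infty) \to (0,2)$ is a diffeomorphism equal to the identity near $0$, embeds $\R^2 \setminus L_0$ symplectically into $\mu_1^{-1}((0,2))$ while restricting to the identity on $\mu_1^{-1}((0,\delta))$; since $\mu_1^{-1}([0,\delta))$ is open and contains $L_0$, gluing with the identity across $L_0$ gives a smooth symplectic embedding $\R^2 \to W$ that is the identity near $L_0$. (The paper phrases this upstairs on $\mathbb{H}^2$ via the fold $\psi$ and the flow of $\wt{\mu}_1$, then descends and replaces the map near $L_0$ by the identity; it is the same shear mechanism you correctly identified in $\Phi^{-1}$, just implemented by a Hamiltonian flow whose speed blows up along the horn.)

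This is not a cosmetic weakening. The proposition is quoted to show that positive $\gamma$-width is equivalent to the existence of an embedding $j$ of $\mathbb{B}_0^{2n-2}(\varepsilon) \times W(\varepsilon)$ with $j \circ \gamma_0 = \gamma$ on the nose, and that composition argument needs $\varphi(s,0) = (s,0)$, not just $\varphi(L_0) = L_0$: with setwise preservation you only get $j \circ \gamma_0 = \gamma \circ \rho$ for an unbounded reparametrization $\rho$, and you would owe a further argument (e.g.\ realizing $\rho^{-1}$ by an area-preserving map of $\R^2$ preserving $L_0$, such as $(x,y) \mapsto (\rho^{-1}(x), y/(\rho^{-1})'(x))$ after extending $\rho^{-1}$ to a diffeomorphism of $\R$) to recover the statement actually used. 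Separately, even granting your reinterpretation, the correction step invokes Greene--Shiohama in a relative form --- making $f$ area-preserving without moving a neighborhood of $L_0$ --- which is not the standard statement of that theorem and is left unproved; the flow-box construction above renders it unnecessary. In short: your local model along $L_0$ is the right one, but the impossibility claim drawn from it is wrong, and correcting that restores both the stronger statement and the paper's much shorter proof.
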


\begin{proof}
  The map
  \begin{align*}
    \R \times [0, 1) &\to \psi^{-1}(W) \subset \mathbb{H}^2; \\
    (\xi, \eta) &\mapsto \Phi_\mu^\xi(0, \eta)
  \end{align*}
  is a symplectic embedding preserving $L$, by explicit calculations or by \cite[Lemma~2.3]{2018arXiv180300998P}.
  By composing a symplectomorphism $\mathbb{H}^2 \to \R \times [0, 1)$ preserving $L$ with it we obtain $\tilde{\varphi}_0 \colon \mathbb{H}^2 \to \psi^{-1}(W)$ a symplectic embedding preserving $L$.
  Let $\varphi_0 \colon \R^2 \to W$ be the continuous injection preserving $L_0$ defined by $\psi \circ \tilde{\varphi}_0 = \varphi_0 \circ \psi$.
  Then replacing $\varphi_0$ in a neighborhood of $L_0$ by the identity we obtain a symplectic embedding of $\R^2$ into $W$ preserving $L_0$.
\end{proof}

\subsection{Diffeomorphisms} \label{ssec:diffeomorphism}

We show the existence of the maps satisfying \cref{ass:g1,ass:g3} by explicit constructions.

There is a smooth map $g_2 \colon B_1 \to \R$ satisfying the following assumption.

\begin{assumption} \label{ass:g2}
\ 
\begin{enumerate} [label={(\arabic*)}]
  \item $\frac{\partial g_2}{\partial c_n}(c_1, \dotsc, c_n) > 0$ if $\ce > 0$;
  \item $\displaystyle\lim_{c_n \to 0^+} g_2(0, \dotsc, 0, c_n) = 1$, and $g_2(c_1, \dotsc, c_{n-1}, 0) = 0$ for $\ce > 0$;
  \item $\displaystyle\lim_{c_n \to \infty} g_2(c_1, \dotsc, c_n) = \infty$.
\end{enumerate}
\end{assumption}

An example of $g_2$ satisfying \cref{ass:g2} is 
\begin{align*}
  g_2(c_1, \dotsc, c_n) &= \frac{1}{2}\Pa{c_n - \frac{1}{c_n}\frac{\ce}{\ce+1} + \sqrt{\Pa{c_n - \frac{1}{c_n}\frac{\ce}{\ce+1}}^2 + 4}} \\
  &= \frac{2c_n}{-\Pa{c_n^2 - \frac{\ce}{\ce+1}} + \sqrt{\Pa{c_n^2 - \frac{\ce}{\ce+1}}^2 + 4c_n^2}}.
\end{align*}
Note that the two formulae coincide in their common domain and they put together can be defined smoothly in an open neighborhood of $B_1$ in $\R^n$.

In order to find $g_1$ satisfying \cref{ass:g1} for a given $g_2$ satisfying \cref{ass:g2}, let $C_1 = \Set{(c_1, \dotsc, c_n) \in B_1 \mmid c_n \leq 1, \ce \leq \frac12 c_n}$.
Note that $C_1 \subset U_1$ and $\ce < g_2 < 2$ on $C_1$.
Define $g_{\frac32} \colon C_1 \cup (B_1 \setminus U_1) \to \R$ as $g_2$ on $C_1$ and $c_n$ on $B_1 \setminus U_1$.
Then $g_{\frac32}$ is increasing in $c_n$.
Let $g_1 \colon B \to \R$ be a smooth extension of $g_{\frac32}$ with $\frac{\partial g_1}{\partial c_n} > 0$, so $g_1$ satisfies \cref{ass:g1}.
One of the approaches to finding such a $g_1$ is to first extend $\frac{\partial g_{\frac32}}{\partial c_n}$ to a smooth positive function $B \to \R$, and then adjust the function so that its $c_n$-antiderivative starting at $0$ on $(c_1, \dotsc, c_{n-1}, 0)$ coincides with $g_{\frac32}$.

\bigskip

There is a smooth map $g_4 \colon B_3 \to \R$ satisfying the following assumption.

\begin{assumption} \label{ass:g4}
\ 
\begin{enumerate} [label={(\arabic*)}]
  \item $\frac{\partial g_4}{\partial c_n}(c_1, \dotsc, c_n) > 0$;
  \item $\displaystyle\lim_{c_n \to 0^+} g_4(0, \dotsc, 0, c_n) = \lim_{c_n \to -\infty} g_4(c_1, \dotsc, c_n) = -\infty$ if $\ce > 0$;
  \item $\displaystyle\lim_{c_n \to \infty} g_4(c_1, \dotsc, c_n) = \infty$.
\end{enumerate}
\end{assumption}

An example of $g_4$ satisfying \cref{ass:g4} is
\begin{align*}
  g_4(c_1, \dotsc, c_n) &= \Pa{1 + \frac{1}{\ce}} c_n - \sqrt{\frac{1}{\ce^2} c_n^2 + \frac{1}{\ce(2 + \ce)}} \\
  &= \frac{(2 + \ce) c_n^2 - \frac{1}{2 + \ce}}{(2 + \ce) c_n +\sqrt{c_n^2 + \frac{\ce}{2 + \ce}}}.
\end{align*}
Note that the two formulae coincide in their common domain and they put together can be defined smoothly in an open neighborhood of $B_3$ in $\R^n$.

Let $C_2 = \Set{(c_1, \dotsc, c_n) \in B_3 \mmid c_n \leq \frac12, \ce \leq \frac12}$.
Note that $C_2 \subset U_2$ and $g_4 < 1$ on $C_2$.
Define $g_{\frac72} \colon C_2 \cup (B_3 \setminus U_2) \to \R$ as $g_4$ on $C_2$ and $c_n$ on $B_3 \setminus U_2$.
Then $g_{\frac72}$ is increasing in $c_n$.
Let $g_3 \colon B \to \R$ be a smooth extension of $g_{\frac72}$ such that $\frac{\partial g_3}{\partial c_n} > 0$, so $g_3$ satisfies \cref{ass:g3}.

\begin{proof} [Proof of \cref{thm:main}]
  In \cref{ssec:momentum-map} we construct a $\mu_1$ satisfying \cref{ass:mu-1} and a $\mu_2$ satisfying \cref{ass:mu-2}.
  In \cref{ssec:diffeomorphism} we find a $g_1$ satisfying \cref{ass:g1} and a $g_3$ satisfying \cref{ass:g3}.
  The existence of such $\mu_1$ and $g_1$ validates \cref{lem:first-symplecto} and the existence of such $\mu_3$ and $g_3$ justifies \cref{lem:second-symplecto}

  Then \cref{thm:main} for $\varepsilon = 1$ is the result of \cref{lem:first-symplecto,lem:second-symplecto} and by rescaling we obtain the case of a general $\varepsilon$.
\end{proof}

\section{Cotangent bundles have nowhere vanishing primitives} \label{sec:cotangent-bundle}

In this section, we prove any cotangent bundle permits a nowhere vanishing primitive.
This gives a collection of examples of hamiltonian Lie algebroids which are defined and discussed in \cite{2018arXiv181111109B}.

As an application of \cref{thm:main}, we prove a property for any cotangent bundle below.
The proof of \cref{prop:cotangent-bundle} when $X$ is noncompact or compact with zero Euler characteristic was proved in \cite[Proposition~6.17]{2018arXiv181111109B}.
Note that $X$ is a Liouville vector field on $(M, \omega)$ if and only if $\iota_X \omega$ is a primitive of $\omega$.

\begin{proposition} [{\cref{cor:nonvanishing-Liouville-cotangent-bundle}}] \label{prop:cotangent-bundle}
  Let $N$ be a smooth manifold, there is $\alpha \in \Omega^1(T^*N)$ such that $\alpha$ is nowhere zero and $\der \alpha = \omega_{\mathrm{can}}$, the canonical symplectic form on $T^*N$.
\end{proposition}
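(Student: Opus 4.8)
The plan is to start from the tautological primitive $\lambda$ on $T^*N$, with $\der\lambda = \omega_{\mathrm{can}}$, and to repair the zeroes of its dual Liouville vector field in two stages: first replace it by a Liouville vector field whose zero set is a discrete subset of the zero section, and then sweep those zeroes off along rays using \cref{cor:noncompact-removing-a-ray}. Recall that the fiberwise Euler vector field $X_0$ (in canonical coordinates $(q_i,p_i)$ simply $\sum_i p_i\,\partial_{p_i}$), whose contraction with $\omega_{\mathrm{can}}$ is $\lambda$ up to sign, is Liouville and vanishes exactly along the zero section. I would perturb $X_0$ inside the space of Liouville vector fields by adding the cotangent lift of a vector field $V = \sum_j V^j\,\partial_{q_j}$ on $N$: setting $H = \sum_j p_j V^j$ and $Y = X_H$, the form $\iota_Y\omega_{\mathrm{can}} = -\der H$ is exact, so $X_1 = X_0 + Y$ is again Liouville with primitive $\lambda - \der H$. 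A direct computation gives $X_1 = \sum_i V^i\,\partial_{q_i} + \sum_i\bigl(p_i - \sum_j p_j\,\partial_{q_i}V^j\bigr)\partial_{p_i}$, so $X_1$ can vanish only over a zero $q$ of $V$, and there the fiber equation reads $p = J_q^{\top}p$ with $J_q$ the Jacobian of $V$ at $q$; after rescaling $V$ to be small, $1$ is not an eigenvalue of $J_q$, forcing $p = 0$. Hence the zero set of $X_1$ is precisely $\Set{(q,0) \mmid V(q) = 0}$.

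This already settles the classical cases uniformly. Every noncompact manifold, and every compact manifold with $\chi(N) = 0$, admits a nowhere-vanishing $V$; for such $V$ the formula shows $X_1$ is itself nowhere vanishing, so $\alpha = \lambda - \der H$ is the desired primitive and no excision is needed. The genuinely new case is $N$ compact with $\chi(N) \neq 0$, where $T^*N$ is noncompact (the $0$-dimensional case being vacuous). Here I would take $V$ with finitely many isolated zeroes $q_1,\dotsc,q_k$ at distinct base points, so that $X_1$ has exactly the zeroes $(q_1,0),\dotsc,(q_k,0)$, all lying on the zero section.

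From each $(q_i,0)$ I would run a properly embedded ray $\gamma_i$ straight out to infinity along the fiber $T^*_{q_i}N$; since the base points are distinct, the $\gamma_i$ lie in disjoint fibers and are pairwise disjoint. I would then check that $(T^*N,\omega_{\mathrm{can}})$ is $\gamma_i$-wide with mutually disjoint wide neighborhoods. Near the zero section each $\gamma_i$ sits in a small Darboux ball, while along the fiber the key point is that, by \cref{prop:embedding-R2}, one only needs to embed $\mathbb{B}_0^{2n-2}(\varepsilon)\times W(\varepsilon)$, and $W(\varepsilon)$ pinches like $\Set{(x,y) \mmid x\abs{y} < \varepsilon^2/2}$; thus the required neighborhood shrinks in the base directions as one escapes to infinity along the fiber and fits inside a fixed cotangent chart once $\varepsilon$ is small. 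Applying \cref{cor:noncompact-removing-a-ray} in each of these disjoint neighborhoods, and assembling the resulting symplectomorphisms (which have disjoint supports) into a single map, I obtain a symplectomorphism $\Psi\colon (T^*N,\omega_{\mathrm{can}}) \to \bigl(T^*N\setminus\textstyle\bigcup_i\image\gamma_i,\ \Res{\omega_{\mathrm{can}}}\bigr)$. Since all zeroes of $X_1$ lie on $\bigcup_i\image\gamma_i$, the restriction of $X_1$ to the complement is a nowhere-vanishing Liouville vector field, so its pullback $\hat X = \Psi^*X_1$ is a nowhere-vanishing Liouville vector field on $(T^*N,\omega_{\mathrm{can}})$, and $\alpha = \iota_{\hat X}\omega_{\mathrm{can}}$ is a nowhere-zero $1$-form with $\der\alpha = \omega_{\mathrm{can}}$.

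The hard part will be the $\gamma_i$-wideness together with the disjointness of the wide neighborhoods: this is exactly where the pinching shape of $W(\varepsilon)$ from \cref{thm:main} is indispensable, since it allows a full $\R^2$ of symplectic room to be threaded out to infinity along a single cotangent fiber without demanding any room in the (possibly compact) base $N$. A secondary technical point is to arrange the several applications of \cref{cor:noncompact-removing-a-ray} with mutually disjoint supports so that they genuinely assemble into the one symplectomorphism $\Psi$; making the rays exit along distinct fibers and taking $\varepsilon$ small enough handles this.
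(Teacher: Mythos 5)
Your proposal is correct and takes essentially the same route as the paper: the paper perturbs the tautological form to $\beta = \varepsilon \der(\iota_X\theta) - \theta$, which is (up to sign and your rescaling convention) exactly the primitive dual to your $X_1 = X_0 + X_H$, shows its zeroes are precisely the zeroes of the base vector field sitting on the zero section, and then sweeps those zeroes away along linear fiber rays by applying \cref{thm:main} inside disjoint cotangent charts $\pi^{-1}(U_c)$ — which is your wideness-plus-excision step in chart form. The only differences are cosmetic: you phrase the construction via Liouville vector fields rather than $1$-forms and handle the noncompact and $\chi(N)=0$ cases directly, where the paper cites prior work, and your Jacobian condition $p = J_q^{\top}p \Rightarrow p=0$ is the careful version of the smallness condition the paper imposes on $\varepsilon$.
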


\begin{proof}
  Let $M = T^*N$ and $n = \dim N$.
  Let $\theta$ be the tautological $1$-form on $M$.
  Let $\pi \colon M \to N$ be the projection.
  Then $\der \theta = -\omega_{\mathrm{can}}$ and $\theta(\xi) = \pi^* \xi$ for any $\xi \in T^*N$.

  Suppose $N$ is compact with nonzero Euler characteristic.
  Let $X$ be a smooth vector field on $N$ with finitely many zeroes, for instance, the Hamiltonian vector field of a Morse function.
  Let $f = \iota_X\theta \colon M \to \R$ and $\varepsilon > 0$.
  Let $\beta = \varepsilon \der f - \theta$.
  In local coordinates $(x_i, \xi_i)$, let $X = \sum_{i = 1}^n X_i \frac{\partial}{\partial x_i}$, then $\theta = \sum_{i = 1}^n \xi_i \der x_i$ and
  \begin{equation*}
    \beta = \sum_{i = 1}^n \Pa{\varepsilon \frac{\partial X_i}{\partial x_i} - 1} \xi_i \der x_i + \sum_{i = 1}^n \varepsilon X_i \der \xi_i.
   \end{equation*}
  Since $N$ is compact, we can choose $\varepsilon > 0$ such that $\varepsilon \frac{\partial X_i}{\partial x_i} < 1$ for any $i$ for any coordinate chart.
  Thus the set of zeroes of $\beta$ is $\operatorname{Fix}(X)$, the set of fixed points of $X$ on $N \simeq$ the zero section of $M = T^*N$.

  Now for any $c \in \operatorname{Fix}(X)$, take a linear ray $\gamma_c$ leaving $c$ in $T^*_cN$.
  Since $\operatorname{Fix}(X)$ is finite, there are disjoint neighborhoods $U_c$ of $c \in \operatorname{Fix}(X)$ in $N$.
  By applying \cref{thm:main} to $\gamma_c$ with neighborhood $\pi^{-1}(U_c)$ one by one, $M$ is symplectomorphic to $M' = M \setminus \bigcup_{c \in \operatorname{Fix}(X)} \image \gamma_c$ with respect to $\omega_{\mathrm{can}}$.
  Let $\alpha$ be the pullback of $\Res{\beta}_{M'}$ to $M$.
  Then $\der \alpha = \omega_{\mathrm{can}}$ since $\der \beta = \omega_{\mathrm{can}}$, and $\alpha$ has no zero left.
\end{proof}

This shows, by \cite[Proposition~6.15]{2018arXiv181111109B}, that the tangent Lie algebroid of any cotangent bundle is hamiltonian.

\bibliographystyle{hplain}
\bibliography{ref}

\begin{thebibliography}{10}

\bibitem{2018arXiv181111109B}
Christian {Blohmann} and Alan {Weinstein}.
\newblock {Hamiltonian Lie algebroids}.
\newblock {\em ArXiv e-prints}, November 2018, 1811.11109.

\bibitem{MR2369441}
Kai Cieliebak, Helmut Hofer, Janko Latschev, and Felix Schlenk.
\newblock Quantitative symplectic geometry.
\newblock In {\em Dynamics, ergodic theory, and geometry}, volume~54 of {\em
  Math. Sci. Res. Inst. Publ.}, pages 1--44. Cambridge Univ. Press, Cambridge,
  2007.

\bibitem{MR984900}
Thomas Delzant.
\newblock Hamiltoniens p\'{e}riodiques et images convexes de l'application
  moment.
\newblock {\em Bull. Soc. Math. France}, 116(3):315--339, 1988.

\bibitem{MR596430}
Johannes~J. Duistermaat.
\newblock On global action-angle coordinates.
\newblock {\em Comm. Pure Appl. Math.}, 33(6):687--706, 1980.

\bibitem{MR978597}
Ivar Ekeland and Helmut Hofer.
\newblock Symplectic topology and {H}amiltonian dynamics.
\newblock {\em Math. Z.}, 200(3):355--378, 1989.

\bibitem{MR809718}
Mikhail~L. Gromov.
\newblock Pseudo holomorphic curves in symplectic manifolds.
\newblock {\em Invent. Math.}, 82(2):307--347, 1985.

\bibitem{MR3371718}
Yael Karshon and Eugene Lerman.
\newblock Non-compact symplectic toric manifolds.
\newblock {\em SIGMA Symmetry Integrability Geom. Methods Appl.}, 11:Paper 055,
  37, 2015.

\bibitem{MR2670163}
Naichung~Conan Leung and Margaret Symington.
\newblock Almost toric symplectic four-manifolds.
\newblock {\em J. Symplectic Geom.}, 8(2):143--187, 2010.

\bibitem{MR906899}
Dusa McDuff.
\newblock Symplectic structures on {${\bf R}^{2n}$}.
\newblock In {\em Aspects dynamiques et topologiques des groupes infinis de
  transformation de la m\'{e}canique ({L}yon, 1986)}, volume~25 of {\em Travaux
  en Cours}, pages 87--94. Hermann, Paris, 1987.

\bibitem{MR1297135}
Dusa McDuff and Lisa Traynor.
\newblock The {$4$}-dimensional symplectic camel and related results.
\newblock In {\em Symplectic geometry}, volume 192 of {\em London Math. Soc.
  Lecture Note Ser.}, pages 169--182. Cambridge Univ. Press, Cambridge, 1993.

\bibitem{2018arXiv180300998P}
\'Alvaro Pelayo and Xiudi Tang.
\newblock {Vu Ngoc's Conjecture on focus-focus singular fibers with multiple
  pinched points}.
\newblock {\em ArXiv e-prints}, March 2018, 1803.00998.

\bibitem{MR2534101}
\'Alvaro Pelayo and San V{\~u}~Ng{\d{o}}c.
\newblock Semitoric integrable systems on symplectic 4-manifolds.
\newblock {\em Invent. Math.}, 177(3):571--597, 2009.

\bibitem{MR2784664}
\'Alvaro Pelayo and San V{\~u}~Ng{\d{o}}c.
\newblock Constructing integrable systems of semitoric type.
\newblock {\em Acta Math.}, 206(1):93--125, 2011.

\bibitem{MR2024634}
Margaret Symington.
\newblock Four dimensions from two in symplectic topology.
\newblock In {\em Topology and geometry of manifolds ({A}thens, {GA}, 2001)},
  volume~71 of {\em Proc. Sympos. Pure Math.}, pages 153--208. Amer. Math.
  Soc., Providence, RI, 2003.

\bibitem{MR1253616}
Lisa Traynor.
\newblock Symplectic embedding trees for generalized camel spaces.
\newblock {\em Duke Math. J.}, 72(3):573--594, 1993.

\bibitem{MR2018823}
Nguyen~Tien Zung.
\newblock Symplectic topology of integrable {H}amiltonian systems. {II}.
  {T}opological classification.
\newblock {\em Compositio Math.}, 138(2):125--156, 2003.

\end{thebibliography}

\authaddresses

\end{document}